\newtheorem{theorem}{Theorem}[section]
\newtheorem{lemma}[theorem]{Lemma}
\newtheorem{remark}[theorem]{Remark}
\title[Boundary rigidity for Randers metrics]{Boundary rigidity for Randers metrics}
\keywords{Inverse problems, boundary rigidity, travel time tomography.}
\subjclass[2020]{53C24, 53A35, 86A22}
\author{Keijo M\"onkk\"onen}
\thanks{Department of Mathematics and Statistics, University of Jyv\"askyl\"a, P.O. Box 35 (MaD) FI-40014 University of Jyv\"askyl\"a, Finland; \href{mailto:kematamo@student.jyu.fi}{kematamo@student.jyu.fi}}
\date{\today}
\newcommand{\R}{{\mathbb R}}
\newcommand{\der}{{\mathrm d}}
\newcommand{\id}{\mathrm{Id}}
\newcommand{\aabs}[1]{\left\lVert #1 \right\rVert}
\begin{document}
\maketitle
\begin{abstract}
If a non-reversible Finsler norm is the sum of a reversible Finsler norm and a closed 1-form, then one can uniquely recover the 1-form up to potential fields from the boundary distance data. We also show a boundary rigidity result for Randers metrics where the reversible Finsler norm is induced by a Riemannian metric which is boundary rigid. Our theorems generalize Riemannian boundary rigidity results to some non-reversible Finsler manifolds. We provide an application to seismology where the seismic wave propagates in a moving medium.
\end{abstract}

\section{Introduction}
In this article we study a certain type of inverse problem for a special class of Finsler norms. The inverse problem we consider is known as the boundary rigidity problem: does the boundary distance data determine the Finsler norm uniquely up to the natural gauge in question? Here we present the problem and our results in a general level; more detailed information can be found in sections~\ref{sec:mainresults}, \ref{sec:applications} and~\ref{sec:preliminaries}.

 Let~$M$ be a smooth manifold with boundary~$\partial M$. A Finsler norm~$F$ on~$M$ is a non-negative function on the tangent bundle $F\colon TM\rightarrow[0, \infty)$ such that for each $x\in M$ the map $y\mapsto F(x, y)$ defines a positively homogeneous norm in~$T_xM$. In general, Finsler norms are homogeneous only in positive scalings and they induce a distance function on~$M$ which is not necessarily symmetric in contrast to the Riemannian distance function. 

Let~$\beta$ be a smooth 1-form on~$M$ and~$F_r$ a reversible Finsler norm, i.e. $F_r(x, -y)=F_r(x, y)$ for all $x\in M$ and $y\in T_xM$. If the norm of $\beta$ with respect to~$F_r$ is small enough, we can define the non-reversible Finsler norm $F=F_r+\beta$. The Finsler norm~$F$ is non-reversible in the sense that $F(x, -y)=F(x, y)$ for all $x\in M$ and $y\in T_xM$ if and only if $\beta=0$. We can thus think that~$\beta$ is an anisotropic perturbation to the reversible Finsler norm~$F_r$. We further assume that~$\beta$ is closed ($\der\beta=0$) which implies that~$F$ and~$F_r$ have the same geodesics as point sets and that~$F$ has reversible geodesics.

Suppose we know the boundary distance data of~$F=F_r+\beta$, i.e. we know the lengths of all geodesics of~$F$ connecting two points on the boundary~$\partial M$. The question is: can we say something about~$\beta$ and~$F_r$ from this information? We prove that if~$M$ is simply connected, then one can uniquely recover the 1-form~$\beta$ (up to potential fields) and the boundary distance data of~$F_r$ from the boundary distance data of~$F$ (see theorem~\ref{thm:sinjectivityfromboundarydistances} for a precise statement).

Riemannian metrics form a special class of reversible Finsler norms. Suppose that~$F_r$ is induced by a Riemannian metric~$g$ and write $F_r=F_g$. If $\aabs{\beta}_g<1$, then $F=F_g+\beta$ defines a non-reversible Finsler norm called Randers metric. We say that the Riemannian manifold~$(M, g)$ is boundary rigid, if the boundary distance data determines the metric~$g$ uniquely up to boundary preserving diffeomorphism. We prove that if~$M$ is simply connected and $(M, g)$ is boundary rigid, then $(M, F)$ is also boundary rigid in the sense that one can uniquely recover the 1-form~$\beta$ up to potential fields and the Riemannian metric~$g$ up to boundary preserving diffeomorphism from the boundary distance data of~$F$. See theorem~\ref{thm:maintheorem} for a precise statement.

Our proofs are mainly based on the following two facts. First, if two Finsler norms differ only by a closed 1-form, then they are projectively equivalent (they have the same geodesics modulo orientation preserving reparametrizations). Second, since $F=F_r+\beta$, we can express the length of any curve~$\gamma$ with respect to~$F_r$ in terms of the symmetric part of the length functional~$L_F(\gamma)$. Similarly, the integral $\int_\gamma\beta$ can be expressed in terms of the antisymmetric part of~$L_F(\gamma)$. This allows us to reduce the boundary rigidity problem of~$F$ to the boundary rigidity problem of~$F_r$.

Boundary rigidity has been studied earlier mainly on Riemannian manifolds. Boundary rigidity is known for example for simple subspaces of Euclidean space~\cite{GRO-filling-riemannian-manifolds}, simple subspaces of symmetric spaces of constant
negative curvature~\cite{BCG-rigidity}, conformal simple metrics which agree on the boundary~\cite{CRO-rigidity-conformal, SUVZ-travel-time-tomography} and for certain two-dimensional manifolds including compact simple surfaces~\cite{CRO-rigidity-negative-curvature, RE-boundary-rigidity-conjecture, MU-reconstruction-problem-two-dimensional, PU-simple-manifolds-boundary-rigidity}. It is also conjectured that compact simple manifolds of any dimension are boundary rigid~\cite{RE-boundary-rigidity-conjecture}. Our results generalize the boundary rigidity results to certain Randers metrics whenever the boundary rigidity of the unperturbed Riemannian manifold is known (see theorem~\ref{thm:maintheorem}). For a more comprehensive treatment of the boundary rigidity problem in Riemannian geometry, see the review~\cite{SUVZ-travel-time-tomography}.  

Closest to our main theorems are rigidity results for magnetic geodesics on Riemannian manifolds. In~\cite{DPSU-boundary-rigidity-magnetic} the authors prove boundary rigidity in the presence of a magnetic field (see also~\cite{AZ-boundary-rigidity-magnetic-and-potential} for a generalization). Magnetic geodesics can be seen as geodesics of a Randers metric under additional assumptions for the vector potential which induces the magnetic field (the magnetic field has to be ``weak")~\cite{HA-spacetimes-randers, TA-magnetic-billiards}. There is also a correspondence between Randers metrics and stationary Lorentzian metrics~\cite{CJM-fermat-metrics, CJS-randers-lorentz, JPRS-randers-lorentz-general} (see~\cite{UYZ-boundary-rigidity-stationary} for a boundary rigidity result on stationary Lorentzian manifolds). 
We note that projectively flat Finsler norms (geodesics of the Finsler norm are segments of straight lines) on compact convex domains in~$\R^2$ are completely determined by their boundary distance data~\cite{ALE-convex-rigidity-plane, AM-pseudo-metrics-on-the-plane, KO-boundary-rigidity-projective-metrics}. In fact, this holds for a more general class of projective metrics in the plane~\cite{KO-boundary-rigidity-projective-metrics}.

Some geometric results similar to the boundary rigidity are known on Finsler manifolds. It was shown in~\cite{deILS-finsler-boundary-distance-map} that the collection of boundary distance maps, which measure distances from the interior to the boundary, determines the topological and differential structures of the Finsler manifold. Further, it was shown in~\cite{deILS-broken-scattering-rigidity} that the broken scattering relation (lengths of all geodesics with endpoints on the boundary and reflecting once in the interior) determines the isometry class of reversible Finsler manifolds admitting a strictly convex foliation.

The boundary rigidity problem is known in seismology as the travel time tomography problem where one tries to recover the speed of sound inside the Earth by measuring travel times of seismic waves on the surface. The ray paths of the seismic waves correspond to geodesics and the travel times correspond to lengths of the geodesics. The travel time tomography problem was solved in the beginning of 20th century for spherically symmetric metrics $g=c^{-2}(r)e$ where~$e$ is the Euclidean metric and $c=c(r)$ is a radial sound speed satisfying the Herglotz condition (see equation \eqref{eq:herglotzcondition})~\cite{HE-inverse-kinematic-problem, WZ-kinematic-problem}. Our results apply to the situation where the seismic wave propagates in a moving medium: one can uniquely recover both the sound speed and the velocity of the medium up to potential fields from travel time measurements (see theorem~\ref{thm:maintheorem} and section~\ref{sec:applications}). The linearization of the boundary rigidity or travel time tomography problem leads to tensor tomography where one wants to characterize the kernel of the geodesic ray transform on symmetric 2-tensor fields~\cite{SHA-ray-transform-riemannian-manifold}. For results in this direction and a general overview of tensor tomography, see the reviews~\cite{IM:integral-geometry-review, PSU-tensor-tomography-progress}. 

\subsection{The main results}\label{sec:mainresults}
Before stating our main results let us briefly introduce some notation; more details can be found in section~\ref{sec:preliminaries}. The proofs of the main theorems can be found in section~\ref{sec:proofsofmaintheorems}.

We denote by~$M$ an $n$-dimensional smooth manifold with boundary~$\partial M$ where $n\geq 2$. We let~$F$ be a Finsler norm and~$F_r$ refers to a reversible Finsler norm, i.e. $F_r(x, -y)=F_r(x, y)$ for all $x\in M$ and $y\in T_xM$. Riemannian metrics are a special case of Finsler norms: if~$g$ is a Riemannian metric, then it induces a reversible Finsler norm~$F_g$ as $F_g(x, y)=\sqrt{g_{ij}(x)y^iy^j}$. We denote by~$\beta$ a smooth closed 1-form ($\der\beta=0$) and $\aabs{\beta}_{F^*}=\sup_{x\in M}F^*(x, \beta(x))$ is the dual norm of~$\beta$ with respect to the co-Finsler norm~$F^*$ in $T^*M$. 

We say that the Finsler norm~$F$ is admissible, if for every two boundary points $x, x'\in\partial M$ there is unique geodesic~$\gamma$ of~$F$ with finite length going from~$x$ to~$x'$. If~$F$ is admissible, then we define the (not necessarily symmetric) map~$d_F(\cdot, \cdot)\colon\partial M\times\partial M\rightarrow [0, \infty)$ by setting $d_F(x, x')=L_F(\gamma)$ where~$L_F(\gamma)$ denotes the length of the curve~$\gamma$ with respect to~$F$. We call the map $d_F(\cdot, \cdot)$ the boundary distance data of~$F$. Finally, we say that the Riemannian manifolds $(M, g_1)$ and $(M, g_2)$ are boundary rigid, if $d_{g_1}(x, x')=d_{g_2}(x, x')$ for all $x, x'\in\partial M$ implies that $g_2=\Psi^*g_1$ where $\Psi\colon M\rightarrow M$ is a diffeomorphism such that $\Psi|_{\partial M}=\id$. In other words, ~$g_1$ and~$g_2$ are isometric as Riemannian metrics.

We recall that a diffeomorphism $\Psi\colon (M, F_2)\rightarrow (M, F_1)$ is an isometry between Finsler manifolds if $\Psi^*F_1=F_2$, or equivalently~$\Psi$ preserves the Finslerian distance~\cite{AK-isometries-finsler}. We make the following observations before giving our first theorem.

\begin{remark}
\label{remark:boundaryrigidityfinsler}
We note that Finsler norms are very flexible with respect to the boundary distance data, i.e. they are not usually boundary rigid in the same sense as Riemannian metrics. Let $\Psi\colon M\rightarrow M$ be a diffeomorphism which is identity on the boundary. If~$F_1$ is an admissible Finsler norm and~$\phi$ is a scalar field which is constant on the boundary and its differential~$\der\phi$ has sufficiently small norm with respect to~$\Psi^*F_1$, then~$F_1$ and $F_2=\Psi^*F_1+\der\phi$ give the same boundary distance data (Finslerian isometries preserve geodesics~\cite{AK-isometries-finsler} and addition of~$\der\phi$ only changes parametrizations of geodesics~\cite{CS-riemann-finsler-geometry}). Especially, if~$F_1$ is reversible, then $\{\Psi^*F_1+\der\phi: \Psi|_{\partial M}=\id \ \text{and} \ \phi|_{\partial M}=\text{constant}\}$ provides a large family of Finsler norms which give the same boundary distances but are not isometric to~$F_1$ (since $\Psi^*F_1+\der\phi$ is non-reversible whenever~$\phi$ is not constant). See also~\cite{BI-boundary-rigidity, CNV-finsler-deformations, CD-length-spectrum-orbifold, IVA-finsler-monotonicity} for results and constructions of non-isometric Finsler norms giving the same boundary distances.
\end{remark}

\begin{remark}
\label{remark:almostisometry}
Finsler norms~$F_1$ and~$F_2$ which satisfy $F_2=\Psi^*F_1+\der\phi$ for some scalar field~$\phi$ and diffeomorphism~$\Psi$ are sometimes called almost isometric Finsler norms and the map $\Psi\colon (M, F_2)\rightarrow (M, F_1)$ is called almost isometry~\cite{CJ-functional-representation-almost-isometries,DJV-lipschitz-almost-isometries, HA-spacetimes-randers, JLP-almost-isometries-spacetimes}. We show in theorem~\ref{thm:maintheorem} that under certain assumptions the boundary distance data determines Randers metrics up to an almost isometry (see also remark~\ref{remark:randersrigidity}). Almost isometries have many good properties: they for example are projective transformations which preserve (minimizing) geodesics up to reparametrization~\cite{JLP-almost-isometries-spacetimes}. Almost isometries can also be defined on general quasi-metric spaces $(X, d)$. It follows that if $\Psi\colon (X_1, d_1)\rightarrow (X_2, d_2)$ is an almost isometry between quasi-metric spaces, then $\Psi$ is an isometry between the metric spaces $(X_1, \widetilde{d}_1)\rightarrow (X_2, \widetilde{d}_2)$ where $\widetilde{d}_i(p, q)=\frac{1}{2}(d_i(p, q)+d_i(q, p))$ is the symmetrized metric~\cite{CJ-functional-representation-almost-isometries, JLP-almost-isometries-spacetimes}. Especially, in the case of metric spaces almost isometries are isometries.
\end{remark}

Our first theorem says that one can uniquely recover (up to potential fields) the perturbation~$\beta$ and the boundary distance data of~$F_r$ from the boundary distance data of~$F=F_r+\beta$. 

\begin{theorem}
\label{thm:sinjectivityfromboundarydistances}
Let~$M$ be a compact and simply connected smooth manifold with boundary. For $i\in\{1, 2\}$ let $F_i=F_{r, i}+\beta_i$ be admissible Finsler norms where~$F_{r, i}$ is an admissible and reversible Finsler norm and~$\beta_i$ is a smooth closed 1-form such that $\aabs{\beta_i}_{F^*_{r, i}}<1$. Then the following are equivalent:
\begin{enumerate}[label=(\roman*)]
    \item\label{item:boundarydatathm1} $d_{F_1}(x, x')=d_{F_2}(x, x')$ for all $x, x'\in \partial M$.
    \medskip
    \item\label{item:gaugethm1} There is unique scalar field~$\phi$ vanishing on the boundary such that $\beta_2=\beta_1+\der\phi$, and $d_{F_{r, 1}}(x, x')=d_{F_{r, 2}}(x, x')$ for all $x, x'\in\partial M$.
\end{enumerate}
\end{theorem}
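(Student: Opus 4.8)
The plan is to split the boundary distance data into its symmetric and antisymmetric parts and to recover $F_{r,i}$ from the former and $\beta_i$ (up to the stated gauge) from the latter. Since $M$ is simply connected, each closed $1$-form $\beta_i$ is exact, so I fix smooth functions $\psi_i$ on $M$ with $\beta_i=\der\psi_i$; then for any piecewise $C^1$ curve $\gamma$ from $x$ to $x'$ one has $\int_\gamma\beta_i=\psi_i(x')-\psi_i(x)$, independently of $\gamma$. Because $\beta_i$ is closed, $F_i$ and $F_{r,i}$ are projectively equivalent and $F_i$ has reversible geodesics, so the unique finite-length $F_i$-geodesic $\gamma_i$ between two boundary points $x,x'$ is, as a point set, also the unique finite-length $F_{r,i}$-geodesic between them, and its reversal is the corresponding $F_i$-geodesic from $x'$ to $x$; here the bound $\aabs{\beta_i}_{F^*_{r,i}}<1$ is used to pass between finiteness of $F_i$- and $F_{r,i}$-lengths via $F_{r,i}\le(1-\aabs{\beta_i}_{F^*_{r,i}})^{-1}F_i$. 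Combining $L_{F_i}(\gamma)=L_{F_{r,i}}(\gamma)+\int_\gamma\beta_i$ with the reversibility of $F_{r,i}$ then yields, for all $x,x'\in\partial M$,
\begin{align}
d_{F_i}(x,x') &= d_{F_{r,i}}(x,x') + \bigl(\psi_i(x')-\psi_i(x)\bigr),\\
d_{F_i}(x',x) &= d_{F_{r,i}}(x,x') - \bigl(\psi_i(x')-\psi_i(x)\bigr),
\end{align}
so that $d_{F_{r,i}}(x,x')=\tfrac12\bigl(d_{F_i}(x,x')+d_{F_i}(x',x)\bigr)$ and $\psi_i(x')-\psi_i(x)=\tfrac12\bigl(d_{F_i}(x,x')-d_{F_i}(x',x)\bigr)$ are determined by the boundary distance data of $F_i$ alone.

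Granting these identities, the implication \ref{item:boundarydatathm1}$\Rightarrow$\ref{item:gaugethm1} is immediate: if $d_{F_1}=d_{F_2}$ on $\partial M\times\partial M$, comparing symmetric parts gives $d_{F_{r,1}}=d_{F_{r,2}}$ on $\partial M\times\partial M$, and comparing antisymmetric parts gives $\psi_2(x')-\psi_2(x)=\psi_1(x')-\psi_1(x)$ for all $x,x'\in\partial M$; hence $\psi_2-\psi_1$ equals a constant $c$ on $\partial M$, and $\phi:=\psi_2-\psi_1-c$ is a scalar field vanishing on $\partial M$ with $\der\phi=\beta_2-\beta_1$. Uniqueness of $\phi$ follows because $M$ is connected and $\partial M\neq\emptyset$: any two such functions differ by a closed $0$-form, hence by a constant, which must vanish. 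For \ref{item:gaugethm1}$\Rightarrow$\ref{item:boundarydatathm1} I would run the computation backwards: from $\beta_2=\beta_1+\der\phi$ one gets $\psi_2-\psi_1-\phi\equiv\text{const}$ on $M$, so $\psi_2-\psi_1$ is constant on $\partial M$ because $\phi|_{\partial M}=0$, whence $\psi_2(x')-\psi_2(x)=\psi_1(x')-\psi_1(x)$ for boundary points; substituting this together with $d_{F_{r,1}}=d_{F_{r,2}}$ into the first displayed identity gives $d_{F_1}=d_{F_2}$ on $\partial M\times\partial M$.

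The one genuinely delicate step, I expect, is the geodesic bookkeeping behind the two displayed identities: one has to check carefully that projective equivalence together with admissibility of both $F_i$ and $F_{r,i}$ really does identify the relevant $F_i$- and $F_{r,i}$-geodesics (keeping track of finiteness of lengths), and that the reversibility of $F_{r,i}$ and of the geodesics of $F_i$ correctly matches the $F_i$-geodesic from $x'$ to $x$ with the reversal of $\gamma_i$. Everything downstream of those identities is a formal manipulation of symmetric and antisymmetric parts.
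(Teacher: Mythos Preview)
Your proposal is correct and follows essentially the same route as the paper: both arguments use projective equivalence (lemma~\ref{lemma:projectiveequivalence}) and reversibility of geodesics (lemma~\ref{lemma:reversegeodesics}) to identify the $F_i$- and $F_{r,i}$-geodesics between boundary points, then split $d_{F_i}$ into its symmetric and antisymmetric parts to recover $d_{F_{r,i}}$ and the boundary increments of the potentials~$\psi_i$, with the remaining steps being the same formal manipulations. One small remark: the inequality $F_{r,i}\le(1-\aabs{\beta_i}_{F^*_{r,i}})^{-1}F_i$ you invoke for finiteness is not actually needed, since admissibility of both~$F_i$ and~$F_{r,i}$ is already part of the hypotheses.
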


\begin{remark}
Since~$\beta_i$ is closed and~$M$ is simply connected, it follows that $\beta_i=\der\phi_i$ for some scalar field~$\phi_i$. Thus~$F_{r, i}$ and~$F_i=F_{r, i}+\beta_i=F_{r, i}+\der\phi_i$ are almost isometric (but not isometric) Finsler norms (see remark~\ref{remark:almostisometry}). Trivially one can define $\phi=\phi_2-\phi_1$ so that $\der\phi=\beta_2-\beta_1$. The assumption $d_{F_1}(x, x')=d_{F_2}(x, x')$ for all $x, x'\in \partial M$ is then used to show that~$\phi$ is constant on the boundary (and one can choose this constant to be zero).
\end{remark}

Let us clarify some of our assumptions in theorem~\ref{thm:sinjectivityfromboundarydistances}. We need the assumption $\aabs{\beta_i}_{F^*_{r, i}}<1$ to guarantee that the sum $F_{r, i}+\beta_i$ defines a Finsler norm. Reversibility of~$F_{r, i}$ is needed so that any curve has the same length with respect to~$F_{r, i}$ as any of its reversed reparametrizations. The condition that~$\beta_i$ is closed is used in three places. First, it is equivalent to that~$F_i$ and~$F_{r, i}$ have the same geodesics up to orientation preserving reparametrizations ($F_i$ and~$F_{r, i}$ are projectively equivalent, see lemma~\ref{lemma:projectiveequivalence}). Second, closedness of~$\beta_i$ is also equivalent to that~$F_i$ has reversible geodesics ($F_i$ is projectively reversible, see lemma~\ref{lemma:reversegeodesics}). Third, $\der\beta_i=0$ implies that~$\beta_i$ is exact since~$M$ is assumed to be simply connected. All these properties are in a crucial role in our proofs.

The existence of unique geodesics connecting boundary points is used in the proof as well and for this reason we assume that the Finsler norms are admissible. We note that since~$F_i$ and~$F_{r, i}$ are projectively equivalent, the admissibility of~$F_{r, i}$ implies the admissibility of $F_{i}$, and vice versa. We also note that~$\der\phi$ is closed so the conclusion $\beta_2=\beta_1+\der\phi$ is compatible with the assumptions on~$\beta_i$. The conclusion that~$\beta_i$ differ only by a potential is similar to the solenoidal injectivity result for the geodesic ray transform of 1-forms~\cite{AR-uniqueness-of-one-forms, PSU-tensor-tomography-progress}.

As an application of theorem~\ref{thm:sinjectivityfromboundarydistances} we have the following boundary rigidity result for Randers metrics (see~\cite[Theorem 6.4]{DPSU-boundary-rigidity-magnetic} for a similar result).

\begin{theorem}\label{thm:maintheorem}
Let~$M$ be a compact and simply connected smooth manifold with boundary. For $i\in\{1, 2\}$ let $F_i=F_{g_i}+\beta_i$ be admissible Finsler norms where~$g_i$ is an admissible Riemannian metric and~$\beta_i$ is a smooth closed 1-form such that $\aabs{\beta_i}_{g_i}<1$. Assume that $(M, g_i)$ is boundary rigid. Then the following are equivalent:
\begin{enumerate}[label=(\alph*)]
    \item\label{item:boundarydatathm2} $d_{F_1}(x, x')=d_{F_2}(x, x')$ for all $x, x'\in \partial M$.
    \medskip
    \item\label{item:gaugethm2} There is unique scalar field~$\phi$ vanishing on the boundary and a diffeomorphism~$\Psi$ which is identity on the boundary such that $\beta_2=\beta_1+\der\phi$ and $g_2=\Psi^*g_1$.
    \medskip
    \item\label{item:gauge2thm2} There is unique scalar field~$\phi$ vanishing on the boundary and a diffeomorphism~$\Psi$ which is identity on the boundary such that $\beta_2=\Psi^*\beta_1+\der\phi$ and $g_2=\Psi^*g_1$.
\end{enumerate}
\end{theorem}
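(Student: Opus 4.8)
The plan is to reduce Theorem~\ref{thm:maintheorem} to Theorem~\ref{thm:sinjectivityfromboundarydistances} by observing that a Riemannian metric $g$ induces the reversible Finsler norm $F_g$, and that $\aabs{\beta_i}_{g_i}<1$ is precisely the condition $\aabs{\beta_i}_{F_{g_i}^*}<1$ needed to form the sum $F_i=F_{g_i}+\beta_i$. First I would prove the equivalence \ref{item:boundarydatathm2} $\Leftrightarrow$ \ref{item:gaugethm2}. For the direction \ref{item:boundarydatathm2} $\Rightarrow$ \ref{item:gaugethm2}, apply Theorem~\ref{thm:sinjectivityfromboundarydistances} with $F_{r,i}=F_{g_i}$: this immediately yields a unique scalar field $\phi$ vanishing on the boundary with $\beta_2=\beta_1+\der\phi$ and gives $d_{F_{g_1}}(x,x')=d_{F_{g_2}}(x,x')$ for all boundary points, i.e.\ $d_{g_1}=d_{g_2}$ on $\partial M\times\partial M$. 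Since each $(M,g_i)$ is assumed boundary rigid, the latter forces $g_2=\Psi^*g_1$ for a diffeomorphism $\Psi$ with $\Psi|_{\partial M}=\id$, which is exactly \ref{item:gaugethm2}.

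For the converse \ref{item:gaugethm2} $\Rightarrow$ \ref{item:boundarydatathm2}, I would argue that both operations — pulling back $g_1$ by a boundary-fixing diffeomorphism and adding the differential of a boundary-vanishing scalar field to $\beta_1$ — leave the boundary distance data invariant. Concretely, $g_2=\Psi^*g_1$ with $\Psi|_{\partial M}=\id$ gives $d_{g_2}=d_{g_1}$ on the boundary, hence $d_{F_{g_2}}=d_{F_{g_1}}$ there; and replacing $\beta_1$ by $\beta_1+\der\phi$ only changes $F_{g_1}+\beta_1$ to an almost isometric Finsler norm whose geodesics agree as point sets with those of $F_{g_1}+\beta_1$ (by the projective equivalence of Lemma~\ref{lemma:projectiveequivalence}) and whose boundary-to-boundary lengths differ by $\int_\gamma\der\phi=\phi(x')-\phi(x)=0$ since $\phi$ vanishes on $\partial M$. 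Assembling these, one gets back \ref{item:boundarydatathm2}; alternatively, one can simply invoke the implication \ref{item:gaugethm1} $\Rightarrow$ \ref{item:boundarydatathm1} of Theorem~\ref{thm:sinjectivityfromboundarydistances} after checking $d_{F_{g_1}}=d_{F_{g_2}}$ on the boundary.

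It remains to handle condition \ref{item:gauge2thm2}. The equivalence \ref{item:gaugethm2} $\Leftrightarrow$ \ref{item:gauge2thm2} is essentially a bookkeeping step: given $\beta_2=\beta_1+\der\phi$ and $g_2=\Psi^*g_1$, write $\beta_2=\Psi^*\beta_1+(\beta_1-\Psi^*\beta_1)+\der\phi$; since $\Psi$ is homotopic to the identity (indeed $\Psi|_{\partial M}=\id$ and $M$ is simply connected) and $\beta_1$ is closed, the form $\beta_1-\Psi^*\beta_1$ is exact, say $=\der\psi$, and one checks $\psi$ can be chosen to vanish on the boundary because $\Psi$ fixes $\partial M$ pointwise so the pullback agrees with $\beta_1$ along the boundary tangent directions. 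Setting $\widetilde\phi=\phi+\psi$ gives \ref{item:gauge2thm2}; the reverse direction is the same computation run backwards. Uniqueness of $\widetilde\phi$ follows from uniqueness of $\phi$ together with the fact that a closed form determines its primitive up to an additive constant, which is pinned down by the boundary-vanishing normalization.

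The main obstacle I expect is the verification in the \ref{item:gaugethm2} $\Leftrightarrow$ \ref{item:gauge2thm2} step that the exact form $\beta_1-\Psi^*\beta_1$ admits a primitive vanishing on $\partial M$; this requires knowing that $\iota^*(\Psi^*\beta_1)=\iota^*\beta_1$ where $\iota\colon\partial M\hookrightarrow M$ is the inclusion, which does hold since $\Psi\circ\iota=\iota$, so $\iota^*(\beta_1-\Psi^*\beta_1)=0$, meaning the primitive is locally constant on each boundary component; simple connectedness of $M$ (hence connectedness of its boundary in the relevant cases, or at least the ability to fix the constants consistently) lets us normalize it to zero. Everything else is a direct translation through Theorem~\ref{thm:sinjectivityfromboundarydistances} and the boundary rigidity hypothesis on $(M,g_i)$.
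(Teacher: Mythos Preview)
Your approach is essentially the same as the paper's: reduce \ref{item:boundarydatathm2}$\Leftrightarrow$\ref{item:gaugethm2} to Theorem~\ref{thm:sinjectivityfromboundarydistances} plus the boundary rigidity hypothesis, and treat \ref{item:gaugethm2}$\Leftrightarrow$\ref{item:gauge2thm2} as a bookkeeping step about the exact form $\beta_1-\Psi^*\beta_1$. The first part matches the paper verbatim.

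In the \ref{item:gaugethm2}$\Leftrightarrow$\ref{item:gauge2thm2} step there is a small wobble you should tighten. First, you do not need the claim that $\Psi$ is homotopic to the identity (which is not obvious from $\Psi|_{\partial M}=\id$ and simple connectedness alone): since $M$ is simply connected, $H^1_{dR}(M)=0$, so the closed form $\beta_1-\Psi^*\beta_1$ is automatically exact. Second, your argument via $\iota^*(\beta_1-\Psi^*\beta_1)=0$ only shows the primitive is locally constant on $\partial M$, and simple connectedness of $M$ does \emph{not} force $\partial M$ to be connected (e.g.\ $S^n\times[0,1]$ for $n\ge 2$), so ``fixing the constants consistently'' is not automatic from that argument. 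The clean fix, which is what the paper does, is to use that $\beta_1$ itself is exact on the simply connected $M$: write $\beta_1=\der\rho$, so $\beta_1-\Psi^*\beta_1=\der(\rho-\rho\circ\Psi)$, and $\rho-\rho\circ\Psi$ vanishes identically on $\partial M$ because $\Psi|_{\partial M}=\id$. Equivalently, for any curve $\gamma$ in $M$ joining boundary points $x,x'$ one has $\int_\gamma(\beta_1-\Psi^*\beta_1)=\big(\rho(x')-\rho(x)\big)-\big(\rho(\Psi(x'))-\rho(\Psi(x))\big)=0$, so the primitive takes the same value at \emph{all} boundary points, regardless of how many components $\partial M$ has. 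With this correction your proof goes through and coincides with the paper's.
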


\begin{remark}
\label{remark:randersrigidity}
Theorem \ref{thm:maintheorem} part \ref{item:gauge2thm2} implies that $F_2=\Psi^*F_1+\der\phi$, i.e. the Randers metrics~$F_1$ and~$F_2$ are almost isometric (see remark~\ref{remark:almostisometry}). Hence we obtain a boundary rigidity result for Randers metrics in the special case when the 1-form~$\beta$ is closed and the Riemannian metric~$g$ is boundary rigid. This generalizes earlier boundary rigidity results to non-reversible (and hence non-Riemannian) Finsler norms. Note that the diffeomorphism $\Psi\colon (M, F_2)\rightarrow (M, F_1)$ in part~\ref{item:gauge2thm2} is an almost isometry but not an isometry since this would require that $\Psi^*\beta_1=\beta_2$~\cite{BRS-zermelo-navigation}. Also note that if $\beta_1=0$ and $\beta_2\neq 0$, then~$F_1$ and~$F_2$ can not be isometric since~$F_1$ is reversible and~$F_2$ is non-reversible.
\end{remark}

The assumptions of theorem~\ref{thm:maintheorem} are the same as in theorem~\ref{thm:sinjectivityfromboundarydistances} except that we also assume the boundary rigidity of $(M, g)$. We can simultaneously recover the metric~$g$ and the 1-form~$\beta$ from the boundary distance data $d_F(\cdot, \cdot)$ since the reversibility of~$F_g$ implies that the data for~$\beta_i$ and~$g_i$ ``decouple": for any curve~$\gamma$ one can obtain $\int_\gamma\beta$ from the antisymmetric part and~$L_g(\gamma)$ from the symmetric part of the length functional~$L_F(\gamma)$.  We note that in theorems~\ref{thm:sinjectivityfromboundarydistances} and~\ref{thm:maintheorem} we only use the lengths of geodesics connecting boundary points as data.

Admissible Finsler norms as we have defined are closely related to simple Finsler norms and simple Riemannian metrics. A Riemannian metric~$g$ on a smooth manifold~$M$ with boundary is simple if it is non-trapping (geodesics have finite length), geodesics have no conjugate points and the boundary~$\partial M$ is strictly convex with respect to~$g$ (the second fundamental form on~$\partial M$ is positive definite). See~\cite[Section 3.7]{PSU-geometric-inverse-book} for many equivalent definitions of simple Riemannian metrics. The concept of a simple Finsler norm can be defined analogously~\cite{BI-simple-finsler, IVA-finsler-monotonicity}. The simplicity of the Finsler norm or Riemannian metric implies that there exists unique minimizing geodesic between any two points of the manifold~\cite{BI-simple-finsler, IVA-finsler-monotonicity, PSU-geometric-inverse-book}. More generally, if the manifold admits a convex function which has a minimum point, then there is a finite number of geodesics between any two non-conjugate points~\cite{CJM-convex-functions, GMP-convex-functions} (see also~\cite{PSUZ-matrix-weights}). 

We remark that one can take $(M, g_i)$ to be a compact simple surface in theorem~\ref{thm:maintheorem} since simple Riemannian metrics are admissible and in two dimensions they are boundary rigid~\cite{PU-simple-manifolds-boundary-rigidity}. If~$g_1$ and~$g_2$ are simple metrics which are conformal and agree on the boundary, then they are boundary rigid in any dimension $n\geq 2$  \cite{CRO-rigidity-conformal, MU-reconstruction-problem-two-dimensional, SUVZ-travel-time-tomography}.

Theorem~\ref{thm:maintheorem} has an application to Randers metrics arising in seismology (see section~\ref{sec:applications} for more details). Let~$M=\overline{B}(0, R)$ be a closed ball of radius $R>0$ and $g=c^{-2}(r)e$ where~$e$ is the Euclidean metric and~$c=c(r)$ is a radial sound speed satisfying the Herglotz condition
\begin{equation}\label{eq:herglotzcondition}
\frac{\der}{\der r}\bigg(\frac{r}{c(r)}\bigg)>0, \quad r\in [0, R].
\end{equation}
It follows that $(M, g)$ is a non-trapping Riemannian manifold with strictly convex boundary~\cite{MO-herglotz-conjugate-points, SUVZ-travel-time-tomography}. Let us further assume that~$g$ has no conjugate points, i.e.~$g=c^{-2}(r)e$ is a simple Riemannian metric. Then~$g$ is admissible and one can recover~$c$ and hence~$g$ uniquely in theorem~\ref{thm:maintheorem} (see~\cite[Remark 2.10]{PSU-geometric-inverse-book} and~\cite{SHA-ray-transform-riemannian-manifold, SUVZ-travel-time-tomography}). Especially, the diffeomorphism~$\Psi$ becomes identity in this case ($\Psi=\id$ also for general conformal simple metrics which agree on the boundary). However, $\Psi$ can be a nontrivial diffeomorphism for general spherically symmetric Riemannian metrics~$g$ (see~\cite[Appendix C]{deIK-spectral-rigidity}). We also note that there are sound speeds~$c$ satisfying the Herglotz condition~\eqref{eq:herglotzcondition} such that~$g$ has conjugate points (and~$g$ is not admissible anymore, see~\cite[Section 3.3.2 and figure 6]{MO-herglotz-conjugate-points}). In section~\ref{sec:applications} we give a physical interpretation for the 1-form~$\beta$ in theorem~\ref{thm:maintheorem} ($\beta$ corresponds to the flow field of a moving medium).

\subsection{Application in seismology}\label{sec:applications}
Here we give an application of theorems~\ref{thm:sinjectivityfromboundarydistances} and~\ref{thm:maintheorem} to seismology where the seismic wave propagates in a moving medium. Assume that we have an object moving on a Riemannian manifold $(M, g)$ with constant speed $\aabs{U}_g=1$. The speed is fixed, but the object can change the direction of the velocity vector~$U$ arbitrarily. Let~$W$ be a vector field which can be interpreted as the additional velocity resulting from a time-independent external force field acting on the object. The net velocity is $U+W$ and we assume $\aabs{W}_g<1$ so that the object can move freely in any direction.

Given any two points $p, q\in M$ we would like to know which path gives the least time when traveling from~$p$ to~$q$ taking the drift~$W$ into account. This is known as the Zermelo's navigation problem (see \cite{BRS-zermelo-navigation, CS-finsler-geometry-randers, SHE-zermelo-riemannian}). It turns out that the unique solution is given by a geodesic of the Randers metric $F=F_\alpha+\beta$ where (see \cite[Section 2.2]{CS-finsler-geometry-randers})
\begin{align}
\alpha_{ij}&=\frac{g_{ij}}{\lambda}+\frac{W_i}{\lambda}\frac{W_j}{\lambda}, \quad\beta_i=-\frac{W_i}{\lambda} \\
W_i&=g_{ij}W^j, \quad \lambda=1-\aabs{W}_g^2
\end{align}
and we have left the dependence on $x\in M$ implicit. Especially, if the parameter of a piecewise smooth curve $\gamma\colon [0, T]\rightarrow M$ represents time, then (see \cite[Lemma 3.1]{SHE-zermelo-riemannian} and~\cite[Lemma 1.4.1]{CS-riemann-finsler-geometry})
\begin{equation}\label{eq:traveltimefunctional}
T=L_F(\gamma).
\end{equation}

Let us interpret the object as a seismic wave (or ray) propagating in a moving medium. The manifold $M$ corresponds to the Earth which can be modelled as a compact and simply connected smooth manifold with boundary (a ball). By the Fermat's principle the path of the ray is a critical point of the travel time functional~\cite{ABS-seismic-rays-as-finsler-geodesics, BS-fermat-principle, CE-seismic-ray-theory}. But since this functional equals to the length functional~$L_F(\gamma)$ of the Randers metric $F=F_\alpha+\beta$ by  equation~\eqref{eq:traveltimefunctional}, the ray paths of seismic waves correspond to geodesics of~$F$ which is the unique solution to the Zermelo's navigation problem. 

If our Riemannian metric is of the form $g=c^{-2}e$ where~$e$ is the Euclidean metric and $c=c(x)$ is the sound speed, then $\aabs{U}_g=1$ is equivalent to $\aabs{U}_e=c$ where $\aabs{\cdot}_e$ is the Euclidean norm of vectors. Thus~$U$ corresponds to the velocity of the propagating wave and the medium moves with velocity~$W$ for which $\aabs{W}_e<c$. The components of the Randers metric take the form
\begin{align}
\alpha_{ij}&=\frac{c^{-2}\delta_{ij}}{1-c^{-2}\aabs{W}_e^2}+\frac{c^{-4}W^iW^j}{(1-c^{-2}\aabs{W}_e^2)^2} \\
\beta_i&=-\frac{c^{-2}W^i}{1-c^{-2}\aabs{W}_e^2}.
\end{align}
Note that here we have identified $W^i=\delta_{ij}W^j$. Now if the 1-form~$\beta$ is closed, then theorem~\ref{thm:sinjectivityfromboundarydistances} implies that one can uniquely recover~$\beta$ up to potential fields from travel time measurements of seismic waves (assuming admissibility of~$\alpha$). In addition, if the Riemannian manifold $(M, \alpha)$ is boundary rigid, then by theorem~\ref{thm:maintheorem} one can also uniquely recover the Riemannian metric~$\alpha$ up to boundary preserving diffeomorphism from the travel time data.

Let us do the following approximation. If we assume that $\aabs{W}_e/c\ll 1$, then
\begin{align}
\alpha_{ij}&\approx c^{-2}\delta_{ij}+\frac{W^i}{c^2}\frac{W^j}{c^2} \\
\beta_i&\approx -\frac{W^i}{c^2}.
\end{align}
When we only work to first order in $\aabs{W}_e/c$, the Riemannian metric~$\alpha$ reduces to
\begin{align}
\alpha_{ij}\approx c^{-2}\delta_{ij}=g_{ij}
\end{align}
and the ray paths of seismic waves correspond to geodesics of the Randers metric $F=F_g+\beta$. Similar linearization result is obtained in~\cite{GW-geometry-sound-rays-wind} for sound waves propagating in air under the influence of wind. We also note that the same result can be obtained from the linearization of travel time measurements~\cite{NO-tomographic-recostruction-of-vector-fields}.

If the sound speed~$c=c(r)$ is radial, $c$ satisfies the Herglotz condition~\eqref{eq:herglotzcondition} and $g=c^{-2}(r)e$ has no conjugate points, then theorem~\ref{thm:maintheorem} implies that in the first order approximation (with respect to $\aabs{W}_e/c$) one can uniquely recover the sound speed~$c$ and the velocity of the medium~$W$ up to potential fields from travel time measurements. If the speed of sound~$c$ is constant, then the condition $\der(W/c^2)=0$ reduces to $\der W=0$, which in the case of a fluid flow means that~$W$ is irrotational (or curl-free). Note that in the approximation we identify $W^i=\delta_{ij}W^j$. In general, if~$c$ is not constant, then the condition $\der(W/c^2)=0$ only means that the scaled flow field $W/c^2$ is irrotational.

To summarize this section: our results (theorems~\ref{thm:sinjectivityfromboundarydistances} and~\ref{thm:maintheorem}) apply to the propagation of seismic waves in a moving medium. Under certain assumptions one can recover the velocity of the medium from travel time measurements, and at the same time one reduces the travel time tomography problem in moving medium to the case where no flow field is involved. This allows one to recover the speed of sound as well in the first order approximation.
 
\section{Finsler manifolds}\label{sec:preliminaries}

In this section we give a brief introduction to Finsler geometry. We only go through definitions and results which are needed in proving our main theorems. Basic theory of Finsler geometry can be found for example in~\cite{AL-global-aspects-finsler-geometry, BCS-introduction-finsler-geometry,  CS-riemann-finsler-geometry, SHE-lectures-on-finsler-geometry}. We use the Einstein summation convention, i.e. indices which appear both as a subscript and superscript are implicitly summed over.

Let~$M$ be a smooth manifold. We denote by $x\in M$ the base point on the manifold and by $y\in T_x M$ the tangent vectors. A Finsler norm~$F$ on~$M$ is a non-negative function on the tangent bundle $F\colon TM\rightarrow[0, \infty)$ such that
\begin{enumerate}[label=(F\arabic*)]
    \item\label{item:smoothness} $F$ is smooth in $TM\setminus\{0\}$ (smoothness outside zero section)
    \item $F(x, y)=0$ if and only if $y=0$ (positivity)
    \item $F(x, \lambda y)=\lambda F(x, y)$ for all $\lambda\geq 0$ (positive homogeneity of degree 1)
    \item\label{item:convexity} $\frac{1}{2}\frac{\partial^2 F^2(x, y)}{\partial y^i\partial y^j}$ is positive definite whenever $y\neq 0$ (convexity). 
\end{enumerate}
The pair $(M, F)$ is called a Finsler manifold. If~$F$ is a Finsler norm, then one can define the reversed Finsler norm~$\overleftarrow{F}$ by setting~$\overleftarrow{F}(x, y)=F(x, -y)$. It follows that~$\overleftarrow{F}$ also satisfies the properties \ref{item:smoothness}--\ref{item:convexity}.

The conditions \ref{item:smoothness}--\ref{item:convexity} imply that for every $x\in M$ the map $y\mapsto F(x, y)$ defines a positively homogeneous norm in $T_xM$. If $F(x, -y)=F(x, y)$ for all $x\in M$ and $y\in T_xM$, we say that the Finsler norm~$F$ is reversible (or absolutely homogeneous). If~$F$ is reversible, then the map $y\mapsto F(x, y)$ defines a norm in $T_xM$. Every Riemannian metric $g=g(x)$ on~$M$ induces a reversible Finsler norm~$F_g$ on~$M$ by setting
\begin{equation}\label{eq:riemannianfinslernorm}
F_g(x, y)=\sqrt{g_{ij}(x)y^iy^j}.
\end{equation}

The condition \ref{item:convexity} allows us to define the local metric $g_{ij}=g_{ij}(x, y)$ as
\begin{equation}
g_{ij}(x, y)=\frac{1}{2}\frac{\partial^2 F^2(x, y)}{\partial y^i\partial y^j}.
\end{equation}
One can then define the Legendre transformation $L\colon TM\rightarrow T^*M$ using the local metric~$g_{ij}$ (see for example~\cite[Chapter 3.1]{SHE-lectures-on-finsler-geometry}). If~$F\colon TM\rightarrow[0, \infty)$ is a Finsler norm, then by using the Legendre transformation one obtains the dual norm (or co-Finsler norm) $F^*\colon T^*M\rightarrow [0, \infty)$ satisfying the properties \ref{item:smoothness}--\ref{item:convexity} in~$T^*M$. The dual norm of a covector $\omega\in T^*_xM$ becomes
\begin{equation}
F^*(x, \omega)=\sup_{\substack{y\in T_xM \\F(x, y)=1}}\omega(y).
\end{equation}
If $F=F_g$ where $g=g(x)$ is a Riemannian metric, then $g_{ij}(x, y)=g_{ij}(x)y^iy^j$ and the Legendre transformation~$L$ and its inverse correspond to the musical isomorphisms.

In this article we study a class of non-reversible Finsler norms. Let~$F_1$ be a Finsler norm on~$M$ and~$\beta$ a smooth nonzero 1-form on~$M$. Assume that the dual norm of~$\beta$ satisfies
\begin{equation}
\aabs{\beta}_{F^*_1}:=\sup_{x\in M}F_1^*(x, \beta(x))<1.
\end{equation}
Then $F=F_1+\beta$ defines also a Finsler norm on~$M$ (see~\cite[Example 6.3.1]{SHE-lectures-on-finsler-geometry} and~\cite[Chapter 11.1]{BCS-introduction-finsler-geometry}). We study the special case $F=F_r+\beta$ where~$F_r$ is a reversible Finsler norm. It follows that Finsler norms of this kind are non-reversible since $F(x, -y)=F(x, y)$ for all $x\in M$ and $y\in T_xM$ if and only if $\beta\equiv 0$. If $F_r=F_g$ where~$g$ is a Riemannian metric, then $F=F_g+\beta$ is called a Randers metric (see~\cite{RAN-randers-metric} for the original definition of a Randers metric). Randers metrics are examples of Finsler norms which are not induced by any Riemannian metric (since Riemannian metrics are always reversible).

The length of a piecewise smooth curve $\gamma\colon [a, b]\rightarrow M$ is defined to be 
\begin{equation}
L_F(\gamma)=\int_a^b F(\gamma(t), \dot{\gamma}(t))\der t.
\end{equation}
In general, $L_F(\gamma)$ is invariant only in orientation preserving reparametrizations. If in addition~$F$ is reversible, then~$L_F(\gamma)$ is also invariant in orientation reversing reparametrizations.
When~$F$ is induced by a Riemannian metric~$g$, then we simply write $L_g:=L_{F_g}$. If~$F$ is a Finsler norm such that $F=F_1+\beta$ where $F_1$ is a Finsler norm and~$\beta$ is a 1-form, then for any piecewise smooth curve~$\gamma$ we have
\begin{equation}
L_F(\gamma)=L_{F_1}(\gamma)+\int_\gamma\beta.
\end{equation}
Note that for the term coming from the 1-form~$\beta$ we have 
\begin{equation}
\int_{\widetilde{\gamma}}\beta=\pm\int_\gamma\beta
\end{equation}
where the plus sign corresponds to reparametrizations~$\widetilde{\gamma}$ of~$\gamma$ preserving the orientation and the minus sign corresponds to reparametrizations reversing the orientation. 

A smooth curve~$\gamma$ on $(M, F)$ is a geodesic, if it satisfies the geodesic equation
\begin{equation}
\ddot{\gamma}^i(t)+2G^i(\gamma(t), \dot{\gamma}(t))=0
\end{equation}
where the spray coefficients $G^i=G^i(x, y)$ are given by
\begin{equation}
G^i(x, y)=\frac{1}{4}g^{il}(x, y)\bigg(y^k\frac{\partial^2 F^2(x, y)}{\partial x^k\partial y^l}-\frac{\partial F^2(x, y)}{\partial x^l}\bigg).
\end{equation}
Here $g^{ij}(x, y)$ are the components of the inverse matrix of $g_{ij}(x, y)$. Geodesics correspond to straightest possible paths in $(M, F)$ and they are locally minimizing. Geodesics are also critical points of the length functional~$L_F(\gamma)$. 

We say that two Finsler norms~$F_1$ and~$F_2$ on a smooth manifold~$M$ are projectively equivalent, if~$F_1$ and~$F_2$ have the same geodesics as point sets. More precisely,~$F_1$ and~$F_2$ are projectively equivalent, if for any geodesic~$\gamma$ of~$F_1$ there is an orientation preserving reparametrization~$\eta$ of~$\gamma$ such that~$\eta$ is a geodesic of~$F_2$, and vice versa.
We also say that a Finsler norm~$F$ has reversible geodesics (or is projectively reversible), if for any geodesic~$\gamma$ of~$F$ the reversed curve~$\overleftarrow{\gamma}$ is also a geodesic of~$F$ up to orientation preserving reparametrization. In other words, $F$ has reversible geodesics if~$F$ and~$\overleftarrow{F}$ are projectively equivalent.

In general, if~$\gamma$ is a geodesic of~$F$, then the reversed curve~$\overleftarrow{\gamma}$ is not necessarily a geodesic of~$F$. If~$F$ is reversible, then~$\overleftarrow{\gamma}$ is also a geodesic. The following lemma says that the same holds (modulo orientation preserving reparametrization) if we perturb a reversible Finsler norm with a closed 1-form (see also  \cite[Theorem 7.1]{MSS-reversible-geodesics} for a more general version of the lemma).

\begin{lemma}[{\cite[p. 406]{CRA-randers-reversible-geodesics}}]\label{lemma:reversegeodesics}
Let~$F=F_r+\beta$ be a Finsler norm where~$F_r$ is a reversible Finsler norm and~$\beta$ is a 1-form such that $\aabs{\beta}_{F^*_r}<1$. Then~$F$ has reversible geodesics (is projectively equivalent to~$\overleftarrow{F}$) if and only if~$\beta$ is closed ($\der\beta=0$).
\end{lemma}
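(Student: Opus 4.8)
The plan is to reduce the lemma, via the trivial identity $F=\overleftarrow F+2\beta$, to the classical $\beta$-change principle that a Finsler norm and its perturbation by a $1$-form are projectively equivalent if and only if that $1$-form is closed (this is the content of lemma~\ref{lemma:projectiveequivalence}).

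First I would record that reversibility of $F_r$ gives $\overleftarrow F=F_r-\beta$: for $x\in M$ and $y\in T_xM$,
\begin{equation}
\overleftarrow F(x,y)=F(x,-y)=F_r(x,-y)+\beta(x)(-y)=F_r(x,y)-\beta(x)(y).
\end{equation}
It follows that $\overleftarrow F=F_r+(-\beta)$ is again a Finsler norm of the same type (note $\aabs{-\beta}_{F_r^*}=\aabs{\beta}_{F_r^*}<1$, since reversibility of $F_r$ forces $F_r^*$ to be reversible — substitute $z=-y$ in the defining supremum), and rearranging gives $F=\overleftarrow F+2\beta$. Thus $F$ is obtained from the Finsler norm $\overleftarrow F$ by adding the $1$-form $2\beta$, and, $F$ being a Finsler norm, the pair $(\overleftarrow F,2\beta)$ satisfies the hypotheses of the $\beta$-change principle.

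The lemma then drops out in both directions simultaneously: by definition $F$ has reversible geodesics precisely when $F$ and $\overleftarrow F$ are projectively equivalent, and by the principle this holds if and only if $2\beta$ is closed, i.e.\ if and only if $\der\beta=0$. (If one only wants to apply lemma~\ref{lemma:projectiveequivalence} with a reversible base, the ``if'' half instead runs through $F_r$: $\der\beta=0$ makes $F=F_r+\beta$ and $\overleftarrow F=F_r+(-\beta)$ each projectively equivalent to $F_r$, and projective equivalence is transitive, so $F\simeq\overleftarrow F$; the ``only if'' half still needs the non-closed case discussed below.)

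So the only substantial step — and the one I expect to be the obstacle — is the $\beta$-change principle itself, specifically that a non-closed $1$-form genuinely perturbs the geodesics rather than being absorbed into a reparametrization. For $H_2=H_1+\omega$ with $\omega=b_i\,\der x^i$, the easy direction is that locally $\omega=\der\phi$, so on short curves $L_{H_2}(\gamma)=L_{H_1}(\gamma)+\phi(\gamma(b))-\phi(\gamma(a))$ and the fixed-endpoint critical curves of the two length functionals coincide, whence $H_1$ and $H_2$ share geodesics up to orientation preserving reparametrization. For the converse one compares spray coefficients, $G_2^i=G_1^i+P\,y^i+Q^i$, where $P$ is positively homogeneous of degree one (a projective term, invisible to the geodesics as point sets) and $Q^i$ is built from the antisymmetric part of the derivative of $\omega$, hence from $\der\omega$; one then checks that $Q^i$ cannot have the projective form $(\text{scalar})\cdot y^i$ unless $\der\omega=0$, so that equality of the geodesic families forces $\omega$ closed. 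Carrying out this $\beta$-change computation — or citing it, e.g.\ \cite{CRA-randers-reversible-geodesics,MSS-reversible-geodesics} — is where the real work sits; the reduction in the previous paragraphs is bookkeeping.
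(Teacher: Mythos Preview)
The paper does not give its own proof of this lemma: it is stated with a bare citation to \cite{CRA-randers-reversible-geodesics} (and a pointer to the generalization in \cite{MSS-reversible-geodesics}), and lemma~\ref{lemma:projectiveequivalence} is likewise only cited. So there is nothing to compare against at the level of argument; your reduction via $F=\overleftarrow F+2\beta$ is a correct and efficient way to see that the two cited lemmas are essentially the same statement.

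One small point worth flagging. Lemma~\ref{lemma:projectiveequivalence} as stated in the paper carries the hypothesis $\aabs{\beta}_{F_1^*}<1$, and this can fail for the pair $(\overleftarrow F,2\beta)$: if at some $(x,y)$ one has $F_r(x,y)=1$ and $\beta_x(y)=\tfrac12$ (permitted by $\aabs{\beta}_{F_r^*}<1$), then $\overleftarrow F(x,y)=\tfrac12$ while $2\beta_x(y)=1$, so $\overleftarrow F^{\,*}(x,2\beta_x)\geq 2$. You are right that this hypothesis is only there to guarantee that the perturbed function is again a Finsler norm, and here that is already given; the underlying projective-change computation needs only that both $\overleftarrow F$ and $F$ are Finsler norms. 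Your parenthetical route through $F_r$ for the ``if'' half sidesteps the issue entirely, since $\aabs{\pm\beta}_{F_r^*}<1$ holds on the nose. Either way, the substantive input --- that a non-closed $1$-form perturbs the spray by a term that cannot be absorbed into a projective factor --- is exactly what the cited references provide, as you note.
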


The next lemma has a central role in the proofs of our main theorems. It says that if we perturb a Finsler norm with a closed 1-form, then the geodesics change only by an orientation preserving reparametrization (see also~\cite[Theorem 3.3]{CA-projective-equivalence-finsler} and~\cite[Example 2.11]{AL-global-aspects-finsler-geometry}).

\begin{lemma}[{\cite[Theorem 3.3.1 and Example 3.3.2]{CS-riemann-finsler-geometry}}]\label{lemma:projectiveequivalence}
Let~$F_1$ be a Finsler norm on a smooth manifold~$M$. Let $F_2=F_1+\beta$ be another Finsler norm where~$\beta$ is a 1-form such that $\aabs{\beta}_{F^*_1}<1$. Then $(M, F_1)$ and $(M, F_2)$ are projectively equivalent if and only if~$\beta$ is closed ($\der\beta$=0).
\end{lemma}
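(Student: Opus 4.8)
The plan is to run the argument through the variational characterization of geodesics rather than through the spray coefficients. Recall that on a Finsler manifold a smooth curve is a geodesic, up to orientation preserving reparametrization, precisely when each of its sufficiently short subarcs is a critical point of the length functional among variations with fixed endpoints, and that this property only depends on the curve as a point set since $L_F$ is invariant under orientation preserving reparametrizations. Because $L_{F_2}(\gamma)=L_{F_1}(\gamma)+\int_\gamma\beta$ for every piecewise smooth curve, the whole statement reduces to understanding the first variation of the functional $\gamma\mapsto\int_\gamma\beta$ under fixed-endpoint variations, and comparing critical points of $L_{F_1}$ with those of $L_{F_2}$.

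First I would record the standard first-variation formula: if $\gamma_s\colon[a,b]\to M$ is a smooth family of curves with fixed endpoints and variation field $V(t)=\partial_s\gamma_s(t)|_{s=0}$, then
\[
\frac{\der}{\der s}\bigg|_{s=0}\int_{\gamma_s}\beta=\int_a^b\der\beta(V(t),\dot\gamma(t))\,\der t ,
\]
which follows from Cartan's identity $\mathcal L_V\beta=\iota_V\der\beta+\der\iota_V\beta$ and the fundamental theorem of calculus, the boundary term $[\beta(V)]_a^b$ vanishing because $V(a)=V(b)=0$; in local coordinates it is just one integration by parts. The ``if'' direction is then immediate: if $\der\beta=0$, the right-hand side vanishes for every fixed-endpoint variation, so $\frac{\der}{\der s}|_{0}L_{F_2}(\gamma_s)=\frac{\der}{\der s}|_{0}L_{F_1}(\gamma_s)$, hence a curve is a critical point of $L_{F_1}$ among fixed-endpoint variations exactly when it is one of $L_{F_2}$. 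Applying this on short subarcs shows that $F_1$ and $F_2$ have the same geodesics as point sets, i.e. they are projectively equivalent.

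For the ``only if'' direction, suppose $F_1$ and $F_2$ are projectively equivalent and let $\gamma$ be any geodesic of $F_1$. Its image is also the image of a geodesic of $F_2$, so by reparametrization invariance of length $\gamma$ is a critical point of both $L_{F_1}$ and $L_{F_2}$ among fixed-endpoint variations, hence of their difference; by the first-variation formula this gives
\[
\int_a^b\der\beta(V(t),\dot\gamma(t))\,\der t=0
\]
for every $F_1$-geodesic $\gamma\colon[a,b]\to M$ and every smooth vector field $V$ along $\gamma$ vanishing at the endpoints. To conclude $\der\beta=0$, fix $p\in M$, a nonzero $v\in T_pM$ and an arbitrary $w\in T_pM$, take the $F_1$-geodesic $\gamma$ with $\gamma(0)=p$, $\dot\gamma(0)=v$ on a short interval $[-\eps,\eps]$, and choose $V(t)=\psi(t)W(t)$ with $\psi\geq 0$ a bump function, $\psi(\pm\eps)=0$, $\psi(0)>0$, and $W$ any field along $\gamma$ with $W(0)=w$. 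Dividing the identity by $\int_{-\eps}^{\eps}\psi$ and letting $\eps\to 0$ forces $\der\beta_p(w,v)=0$; since $w$ and $v\neq0$ are arbitrary and $\der\beta_p$ is antisymmetric, $\der\beta_p=0$, and as $p$ was arbitrary, $\der\beta=0$.

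I expect the main obstacle to be this last direction, specifically the localization step: one has to make sure that the integral identities are available for a sufficiently rich supply of $F_1$-geodesics and admissible variation fields, and that the short-segment limit genuinely extracts the pointwise value $\der\beta_p(w,v)$; the ``if'' direction and the first-variation computation are routine. An alternative, more computational route would compare the spray coefficients directly and show $G_2^i=G_1^i+P(x,y)\,y^i$ for a suitable projective factor $P$ exactly when $\der\beta=0$, as in the cited references, but the variational argument above sidesteps those calculations.
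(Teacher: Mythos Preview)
The paper does not actually prove this lemma; it is quoted from Chern--Shen with a citation and then used as a black box in the proofs of the main theorems. So there is no in-paper argument to compare against.

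Your variational proof is correct and self-contained. The first-variation identity for $\gamma\mapsto\int_\gamma\beta$ is standard, and the ``if'' direction follows immediately from it. In the ``only if'' direction your localization works, but you can shorten it: from $\int_a^b\der\beta(V(t),\dot\gamma(t))\,\der t=0$ for all $V$ vanishing at the endpoints, take $V(t)=\psi(t)e_k(t)$ with $\{e_k\}$ a frame along $\gamma$ and $\psi$ an arbitrary bump, and the fundamental lemma of the calculus of variations gives $\der\beta_{\gamma(t)}(e_k(t),\dot\gamma(t))=0$ for every $t$ and every $k$ directly, without the $\eps\to 0$ limit. Since through each $(p,v)$ with $v\neq 0$ there passes an $F_1$-geodesic, this yields $\der\beta_p(\cdot,v)=0$ for all nonzero $v$, hence $\der\beta=0$.

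For comparison, the argument in the cited reference is precisely the spray-coefficient route you mention at the end: one verifies Rapcs\'ak's criterion by computing that the geodesic sprays of $F_1$ and $F_2=F_1+\beta$ differ by a term of the form $P(x,y)\,y^i$ exactly when $\der\beta=0$. Your variational approach is more conceptual and coordinate-free, at the cost of not exhibiting the projective factor explicitly; the spray computation is more mechanical but gives that extra information.
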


\section{Proofs of the main theorems}
\label{sec:proofsofmaintheorems}
In this section we prove our main results. The proofs are based on lemmas~\ref{lemma:reversegeodesics} and~\ref{lemma:projectiveequivalence} which imply that~$F_i$ and~$F_{r, i}$ have the same geodesics up to orientation preserving reparametrizations and that~$F_i$ has reversible geodesics. This allows us to express the integrals of the 1-forms~$\beta_i$ in terms of the boundary distance data of~$F_i$. Similarly we can express the boundary distance data of~$g_i$ in terms of the boundary distance data of~$F_i$, which implies that~$g_1$ and~$g_2$ differ only by a boundary preserving diffeomorphism since the underlying manifolds $(M, g_i)$ are assumed to be boundary rigid. 

We are now ready to prove our main theorems. Recall that a Finsler norm~$F$ is admissible if every two boundary points can be joined by unique geodesic of~$F$ with finite length.

\begin{proof}[Proof of theorem \ref{thm:sinjectivityfromboundarydistances}.]
Let us first prove the direction \ref{item:boundarydatathm1}$\Rightarrow$\ref{item:gaugethm1}. We note that if~$\gamma$ is any curve on~$M$ and~$\overleftarrow{\gamma}$ any of its reversed reparametrizations, then reversibility of~$F_{r, i}$ implies that $L_{F_{r, i}}(\gamma)=L_{F_{r, i}}(\overleftarrow{\gamma})$ and
\begin{equation}
\int_\gamma\beta_i=\frac{L_{F_i}(\gamma)-L_{F_i}(\overleftarrow{\gamma})}{2}.
\end{equation}
Let $x, x'\in\partial M$ and~$\gamma_i$ be the unique geodesic of~$F_{r, i}$ connecting~$x$ to~$x'$ (see figure~\ref{fig:proof}). By lemma~\ref{lemma:projectiveequivalence} there is an orientation preserving reparametrization~$\eta_i$ of~$\gamma_i$ such that~$\eta_i$ is a geodesic of~$F_i$. Note that since~$\eta_i$ connects~$x$ to~$x'$ we have $L_{F_i}(\eta_i)=d_{F_i}(x, x')$ by  admissibility of~$F_i$. Using lemma~\ref{lemma:reversegeodesics} let~$\overleftarrow{\eta_i}$ be the reversed curve which is a geodesic of~$F_i$. Now again by admissibility of~$F_i$ we have $L_{F_i}(\overleftarrow{\eta_i})=d_{F_i}(x', x)$ since~$\overleftarrow{\eta_i}$ connects~$x'$ to~$x$. We obtain
\begin{align}
\int_{\gamma_1}\beta_1&=\int_{\eta_1}\beta_1=\frac{L_{F_1}(\eta_1)-L_{F_1}(\overleftarrow{\eta_1})}{2}=\frac{d_{F_1}(x, x')-d_{F_1}(x', x)}{2} \\ &=\frac{d_{F_2}(x, x')-d_{F_2}(x', x)}{2}=\frac{L_{F_2}(\eta_2)-L_{F_2}(\overleftarrow{\eta_2})}{2}=\int_{\eta_2}\beta_2=\int_{\gamma_2}\beta_2.
\end{align}
The closed 1-form~$\beta_i$ is exact because~$M$ is simply connected, i.e. $\beta_i=\der\phi_i$ for some scalar field~$\phi_i$. Since~$\gamma_1$ and~$\gamma_2$ both connect~$x$ to~$x'$, we obtain that
\begin{equation}
\phi_1(x')-\phi_1(x)=\int_{\gamma_1}\beta_1=\int_{\gamma_2}\beta_2=\phi_2(x')-\phi_2(x).
\end{equation}
It follows that $\phi_2-\phi_1$ is constant on the boundary. Let this constant be~$c\in\R$ and define the scalar field $\phi=\phi_2-\phi_1-c$. Then~$\phi$ satisfies $\der\phi=\beta_2-\beta_1$ and $\phi|_{\partial M}=0$. If there is another scalar field~$\phi'$ such that $\der\phi'=\beta_2-\beta_1$ and $\phi'|_{\partial M}=0$, then $\der(\phi-\phi')=0$ and $\phi-\phi'=\text{constant}=0$ since~$M$ is connected and both scalar fields vanish on the boundary. This proves the first claim of the first implication.

For the second claim we note that for any curve~$\gamma$ and any of its reversed reparametrization~$\overleftarrow{\gamma}$ it holds that 
\begin{equation}
L_{F_{r, i}}(\gamma)=\frac{L_{F_i}(\gamma)+L_{F_i}(\overleftarrow{\gamma})}{2}.
\end{equation}
Now let~$\gamma_i$ and~$\eta_i$ be as in the beginning of the proof. It follows that
\begin{align}
d_{F_{r, 1}}(x, x')&=L_{F_{r, 1}}(\gamma_1)=L_{F_{r, 1}}(\eta_1)=\frac{L_{F_1}(\eta_1)+L_{F_1}(\overleftarrow{\eta_1})}{2} \\&=\frac{d_{F_1}(x, x')+d_{F_1}(x', x)}{2} =\frac{d_{F_2}(x, x')+d_{F_2}(x', x)}{2} \\ &= \frac{L_{F_2}(\eta_2)+L_{F_2}(\overleftarrow{\eta_2})}{2}=L_{F_{r, 2}}(\eta_2)=L_{F_{r, 2}}(\gamma_2)=d_{F_{r, 2}}(x, x')
\end{align}
proving the second claim of the first implication.

Let us then prove the implication \ref{item:gaugethm1}$\Rightarrow$\ref{item:boundarydatathm1}. First we note that for any curve~$\gamma$ it holds that
\begin{equation}
L_{F_i}(\gamma)=L_{F_{r, i}}(\gamma)+\int_\gamma\beta_i.
\end{equation}
Let $x, x'\in\partial M$ and~$\eta_i$ be the unique geodesic of~$F_i$ connecting~$x$ to~$x'$. By lemma~\ref{lemma:projectiveequivalence} there is an orientation preserving reparametrization~$\sigma_i$ of~$\eta_i$ such that~$\sigma_i$ is a geodesic of~$F_{r, i}$. Simply connectedness of~$M$ and the assumptions on~$\beta_i$ imply that $\int_{\eta_2}\beta_2=\int_{\eta_1}\beta_1$. Using the assumption $d_{F_{r, 1}}(x, x')=d_{F_{r, 2}}(x, x')$ and the admissibility of~$F_{r, i}$ it follows that
\begin{align}
d_{F_2}(x, x')&=L_{F_2}(\eta_2)=L_{F_{r, 2}}(\eta_2)+\int_{\eta_2}\beta_2=L_{F_{r, 2}}(\sigma_2)+\int_{\eta_1}\beta_1 \\ 
&=d_{F_{r, 2}}(x, x')+\int_{\eta_1}\beta_1=d_{F_{r, 1}}(x, x')+\int_{\eta_1}\beta_1 \\ 
&=L_{F_{r, 1}}(\sigma_1)+\int_{\eta_1}\beta_1=L_{F_{r, 1}}(\eta_1)+\int_{\eta_1}\beta_1 \\
&= L_{F_1}(\eta_1)=d_{F_1}(x, x').
\end{align}
This concludes the proof.
\end{proof}

\begin{figure}[htp]
\centering
\includegraphics[height=7.8cm]{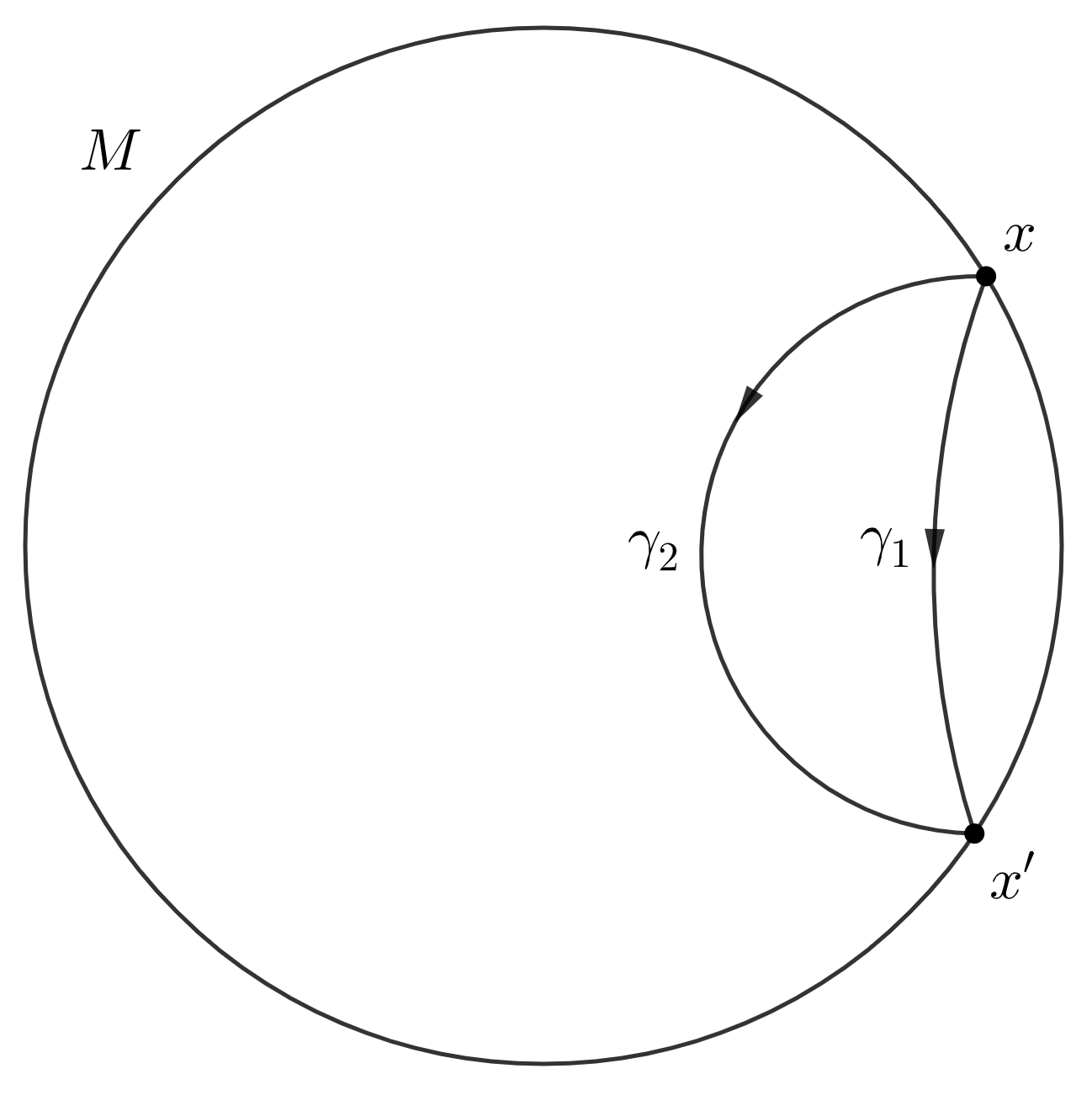}
\caption{A picture illustrating the proof of theorem~\ref{thm:sinjectivityfromboundarydistances}. Here~$\gamma_i$ is a geodesic of~$F_{r, i}$ connecting the boundary point $x\in \partial M$ to the boundary point $x'\in\partial M$. The curves~$\gamma_i$ are also geodesics of~$F_i$ up to orientation preserving reparametrization by lemma~\ref{lemma:projectiveequivalence}. The picture is highly simplified; in reality the curves~$\gamma_1$ and~$\gamma_2$ can for example cross each other.}
\label{fig:proof}
\end{figure}

\begin{proof}[Proof of theorem \ref{thm:maintheorem}.]
If $d_{F_1}(x, x')=d_{F_2}(x, x')$ for all $x, x'\in\partial M$, then by theorem~\ref{thm:sinjectivityfromboundarydistances} there is unique scalar field~$\phi$ vanishing on the boundary such that $\beta_2=\beta_1+\der\phi$, and $d_{g_1}(x, x')=d_{g_2}(x, x')$ for all $x, x'\in\partial M$. Since we assume that $(M, g_i)$ are boundary rigid, there is a diffeomorphism $\Psi\colon M\rightarrow M$ which is identity on the boundary such that $g_2=\Psi^*g_1$. This proves the implication \ref{item:boundarydatathm2}$\Rightarrow$\ref{item:gaugethm2}. The implication \ref{item:gaugethm2}$\Rightarrow$\ref{item:boundarydatathm2} is proved in the same way as the implication \ref{item:gaugethm1}$\Rightarrow$\ref{item:boundarydatathm1} in theorem~\ref{thm:sinjectivityfromboundarydistances} using the fact that~$\Psi$ is a Riemannian isometry fixing boundary points.

Let us then prove the equivalence \ref{item:gaugethm2}$\Leftrightarrow$\ref{item:gauge2thm2}. Let $\Psi\colon M\rightarrow M$ be a diffeomorphism which is identity on the boundary. Since~$\beta_1$ is closed and the pullback commutes with the differential, we have that~$\Psi^*\beta_1$ is also closed. This implies that $\beta_1-\Psi^*\beta_1$ is closed and hence exact because~$M$ is simply connected, i.e. there is a scalar field~$\widetilde{\phi}$ such that $\beta_1-\Psi^*\beta_1=\der\widetilde{\phi}$. Let $x, x'\in\partial M$ be any two boundary points and~$\gamma$ any curve connecting~$x$ to~$x'$. Since~$\Psi$ is identity on the boundary and~$\beta_1$ is exact we have
\begin{equation}
0=\int_\gamma(\beta_1-\Psi^*\beta_1)=\int_\gamma\der\widetilde{\phi}=\widetilde{\phi}(x')-\widetilde{\phi}(x).
\end{equation}
Therefore~$\widetilde{\phi}$ is constant on the boundary and we can subtract this constant to obtain a scalar field~$\phi'$ such  that $\beta_1-\Psi^*\beta_1=\der\phi'$ and $\phi'|_{\partial M}=0$. Thus~$\beta_1$ and~$\Psi^*\beta_1$ differ only by a potential which vanishes on the boundary, concluding the proof.
\end{proof}

\subsection*{Acknowledgements}
The author was supported by Academy of Finland (Centre of Excellence in Inverse Modelling and Imaging, grant numbers 284715 and 309963).
The author is grateful to Joonas Ilmavirta for helpful discussions and suggestions to improve the article. The author wants to thank Jesse Railo and Teemu Saksala for discussions, and \'Arp\'ad Kurusa for providing access to the article~\cite{KO-boundary-rigidity-projective-metrics}. The author also wishes to thank the anonymous referee for helpful comments.
\bibliography{refs} 
\bibliographystyle{abbrv}
\end{document}